\documentclass[11pt]{article}

\usepackage[margin=1.15in]{geometry}
\usepackage{amsmath,amssymb,amsthm}
\usepackage{microtype}
\usepackage[hidelinks]{hyperref}

\newtheorem{theorem}{Theorem}[section]
\newtheorem{lemma}[theorem]{Lemma}
\newtheorem{proposition}[theorem]{Proposition}
\newtheorem{corollary}[theorem]{Corollary}

\newcommand{\Mex}{\operatorname{mex}}

\title{Unique Winning Opening Move in Three-Row Chomp}
\author{Erez Sheiner}
\date{May 22, 2026}

\begin{document}
\maketitle

\begin{abstract}
Chomp was introduced by Gale in 1974 \cite{Gale1974}. In the same paper,
Gale reported that the $3\times n$ games had been completely analyzed for
$n\le 100$, with a unique winning first move in every case, and asked whether
winning first moves are unique in general. Although the general uniqueness
statement is false \cite[Section~7.1]{BrouwerEtAl2005}, we prove that the
three-row uniqueness phenomenon suggested by Gale's computations holds for
all $n$: every $3\times n$ Chomp rectangle has exactly one winning opening
move. This settles the three-row case of Gale's 52-year-old first-move
uniqueness question.

The proof is carried out in the two-variable recurrence introduced by
Brouwer, Horv\'ath, Moln\'ar-S\'aska, and Szab\'o \cite{BrouwerEtAl2005}
for the function $f(q,r)$ whose values encode the $P$-positions. The main
local ingredient is a rightmost-hole principle: if a value $p$ is absent from
the set $C(q,r)$ but belongs to all corresponding sets $C(t,r)$ for $q<t<p$,
then all intermediate values $q+1,\ldots,p-1$ are forced to belong to
$C(q,r)$. This separates the diagonal values from the starts of constant
rows, and yields a partition of the positive integers into the two possible
types of winning opening moves. It also identifies the row of the unique
opening move: no first-row opening move is winning; the second-row and
third-row cases are precisely the two complementary Chomp sequences A029900
and A029901.
\end{abstract}

\section{Introduction}

Chomp, introduced by Gale \cite{Gale1974}, is played on a finite
$n\times m$ binary matrix $A$. The game starts with $A_{ij}=1$ for all $i,j$,
and a move $(p,q)$ is allowed if $A_{pq}=1$. The move $(p,q)$ sets
$A_{ij}=0$ for every $i\ge p$, $j\ge q$. The player forced to make the move
$(1,1)$ loses. In the three-row case we write a position as
\[
        [p,q,r],\qquad p\ge q\ge r\ge 0,
\]
where $p,q,r$ are the row lengths in nonincreasing order. A $P$-position is a
position from which the previous player wins, and an $N$-position is a
position from which the next player wins. For further historical references,
including the discussion of Chomp in \emph{Winning Ways} \cite[p.~632]{WinningWays2003},
see Brouwer's Chomp page \cite{BrouwerChompPage}.

A move reducing the second row of $[p,q,r]$ to length $a<q$ gives
$[p,a,\min(a,r)]$; in particular, if $a<r$, the resulting position is
$[p,a,a]$. Similarly, reducing the first row to length $a<p$ gives
$[a,\min(a,q),\min(a,r)]$.

The three-row case has been studied through recurrences, computation, and
renormalization methods. Zeilberger gave an early recurrence-based analysis of
three-row Chomp \cite{Zeilberger2001}. Brouwer, Horv\'ath, Moln\'ar-S\'aska,
and Szab\'o introduced the two-variable function used below and developed the
associated table of $P$-positions \cite{BrouwerEtAl2005}. Brouwer's Chomp
page records computations with $q,r\le 130{,}000$ and lists related OEIS
sequences, including the two opening-move sequences A029900 and A029901
\cite{BrouwerChompPage,OEISA029900,OEISA029901}. Friedman and Landsberg
studied the same structure from a nonlinear-dynamics and renormalization
viewpoint \cite{FriedmanLandsberg2007}. A recent preprint of Garg on
$4\times n$ Chomp also places the three-row case in this context and states
that a proof of three-row uniqueness had remained out of reach
\cite{Garg2026}. The present note provides such a proof directly from the
recurrence; equivalently, it proves the complementarity of A029900 and A029901.

\begin{theorem}[Unique opening move]\label{thm:main}
For every positive integer $n$, the initial three-row Chomp rectangle
\[
        [n,n,n]
\]
has exactly one winning opening move.
\end{theorem}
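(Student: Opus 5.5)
Opening moves from $[n,n,n]$ come in three types: first-row moves to $[a,a,a]$ with $1\le a<n$; second-row moves to $[n,a,a]$ with $0\le a<n$; and third-row moves to $[n,n,a]$ with $0\le a<n$. The plan has three parts. First show that no first-row move wins. Then show that the set of $n$ for which some second-row move wins and the set for which some third-row move wins partition the positive integers. Finally show that within each type the winning move is unique. All statements will be translated into the Brouwer--Horv\'ath--Moln\'ar-S\'aska--Szab\'o function $f(q,r)$. Recall that for each pair $(q,r)$ with $q\ge r$ there is exactly one $p=f(q,r)\ge q$ (or possibly none in degenerate cases) making $[p,q,r]$ a $P$-position. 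This uniqueness holds because two $P$-positions differing only in the first row would have a move between them. It also gives $f(q,r)=\Mex$-type recurrences over the set $C(q,r)$ of first-row lengths already excluded by moves into smaller $P$-positions.

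First-row moves are immediate: $[a,a,a]$ is an $N$-position for every $a\ge1$, since it is a rectangle and strategy stealing applies. The move to $[a,a,a]$ is therefore never winning. Uniqueness within the other two rows is also formal. The move $[n,a,a]$ wins iff $f(a,a)=n$. Because $[n,a,a]$ and $[n,a',a']$ with $a'<a$ are connected by a single move, at most one $a$ can satisfy $f(a,a)=n$. Likewise $[n,n,a]$ wins iff $f(n,a)=n$, and the positions $[n,n,a]$ for different $a$ are connected by third-row moves, so at most one $a$ works. A second-row winner $[n,a,a]$ and a third-row winner $[n,n,b]$ cannot both exist, since otherwise $[n,n,b]$ would have a move to $[n,a,a]$ when $a<b$. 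When $a\ge b$ that move is not available, so a separate argument is needed: exclusivity must come from the structure of $f$. Existence of at least one winner is again strategy stealing.

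So the theorem reduces to this: for each $n$, exactly one of the following holds.
\begin{itemize}
\item[(D)] $n$ is a diagonal value, $n=f(a,a)$ for some $a<n$.
\item[(R)] $f(n,b)=n$ for some $b<n$, meaning row $q=n$ starts with a constant stretch at its own value.
\end{itemize}
At least one holds by strategy stealing combined with the first-row observation, so the content is exclusivity. Suppose both hold: $f(a,a)=n$ and $f(n,b)=n$ with $a,b<n$. By the remark above we may assume $a\ge b$. Here I would prove the rightmost-hole principle announced in the abstract. Suppose $p\notin C(q,r)$ but $p\in C(t,r)$ for all $q<t<p$. Then every value $q+1,\dots,p-1$ lies in $C(q,r)$. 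I would prove this by induction on $p-q$, unwinding which earlier $P$-positions put each intermediate value $s$ into $C(t,r)$. One then transports that witness down to row $q$ using the monotonicity of $f$ in $r$ and the inequality $f(q,r)\ge q$.

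Applying the principle with $r=b$, $p=n$ and $q=a$, I would derive $f(a,b)\neq n$ and a contradiction with $f(a,a)=n$. The point is that the hole at $n$ in row $a$ would force $f(a,b)=\Mex C(a,b)=n$. That would make $[n,a,b]$ and $[n,a,a]$ both $P$-positions, yet they are related by a single third-row move since $b\le a$.

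I expect the main obstacle to be the rightmost-hole principle itself, specifically verifying the membership hypothesis $n\in C(t,b)$ for all $a<t<n$. My first attempt is to obtain it from the $P$-positions $[n,t,\cdot]$ lying below $[n,n,b]$ on the $q$-axis. If that fails, I would instead pick $a$ maximal so that the hypothesis holds automatically for larger $t$.
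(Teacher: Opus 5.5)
Your reduction is sound: no first-row move wins, uniqueness within the second and third rows is formal, and winners $[n,a,a]$, $[n,n,b]$ with $a\le b$ are excluded by a direct second-row move. This also covers $a=b$, so you may take $b<a<n$.

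\textbf{Main gap: the membership hypothesis.} The gap is the step you flag as the main obstacle, and it is the heart of the proof. From $[n,n,b]$ being a $P$-position you only learn the following: for each $b<t<n$, the $N$-position $[n,t,b]$ is won either by a third-row move (so $n\in C(t,b)$) or by a first-row move to $[f(t,b),t,b]$ with $t<f(t,b)<n$. Nothing local excludes the second alternative, so your first attempt yields only a disjunction.

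Your fallback has the right flavour but the wrong index: $a$ is pinned down by $f(a,a)=n$ and cannot be chosen maximal. The missing idea is the paper's row-start propagation. From $f(n,b)=n$ alone, prove that $n\in C(q,b)$ for \emph{every} $b<q<n$, by taking the largest $q$ where this fails.
\begin{itemize}
\item At that $q$ the hole lemma's membership hypothesis holds by maximality.
\item $f(q,b)>q$, since otherwise the constant branch of the recurrence gives $f(t,b)=f(q,b)\le q$ for all $t\ge q$, contradicting $f(n,b)=n$.
\item So $(q,b)$ is a mex cell and, by the interval lemma, $\{1,\dots,n-1\}\subseteq B(q,b)$.
\item Meanwhile $n\notin B(q,b)$: $n\notin C(q,b)$ by the choice of $q$, and $n\notin R(q,b)$ because $[n,n,b]$ is a $P$-position.
\end{itemize}
Hence $f(q,b)=n$, and $[n,q,b]$, $[n,n,b]$ are $P$-positions one move apart, a contradiction.

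With propagation in hand you never need the hole lemma at $a$. Propagation gives $n\in C(a,b)$, so $f(a,b')=n$ for some $b'<b<a$. This contradicts $f(a,a)=n$ by distinctness of $f(a,0),\dots,f(a,a)$.

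\textbf{Second gap: false facts about $f$.} The tools you propose for the hole lemma rest on properties $f$ does not have.
\begin{itemize}
\item $f$ is total and may be smaller than $q$, in which case it encodes $[p,p,r]$. For example $f(q,1)=2$ for $q\ge 2$, so $f(q,r)\ge q$ fails.
\item $f$ is not monotone in $r$: $f(5,0)=6$, $f(5,1)=2$, $f(5,2)=7$.
\end{itemize}
Your sketched induction therefore has no footing. The paper proves the interval lemma by counting:
\begin{itemize}
\item For $q<t<p$, the rows $s_t<r$ with $f(t,s_t)=p$ are distinct.
\item Analysing the $N$-position $[p,q,s_t]$, an option of $[p,t,s_t]$, and using $p\notin C(q,r)$ shows $u_t=f(q,s_t)\in(q,p)$.
\item The $u_t$ are distinct, so $t\mapsto u_t$ is a bijection of $\{q+1,\dots,p-1\}$.
\end{itemize}

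Likewise $f(a,b)=\Mex C(a,b)$ is wrong. The recurrence takes the mex of $B=R\cup C$, and only at mex cells. The interval lemma puts only $q+1,\dots,p-1$ into $C(q,r)$, so you need $f(q,r)>q$ and $p\notin R(q,r)$ separately, as above.
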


The rest of this note proves the theorem.

\section{The recurrence}

The following elementary lemma explains the meaning of the function used in
the recurrence.

\begin{lemma}[Existence and uniqueness of the encoded $P$-position]
\label{lem:encodedexistence}
For every $q\ge r\ge 0$, there is a unique integer $p>r$ such that one of the
following holds:
\begin{enumerate}
\item[(i)] $r<p<q$ and $[p,p,r]$ is a $P$-position;
\item[(ii)] $p\ge q$ and $[p,q,r]$ is a $P$-position.
\end{enumerate}
In either case, the corresponding $P$-position is obtained from any sufficiently
long position $[k,q,r]$ by reducing the first row to length $p$.
\end{lemma}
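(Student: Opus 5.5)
Fix $q\ge r\ge 0$ and work with the family of positions obtained from a long position $[k,q,r]$ by cutting the first row to length $a$, for $a>r$. Such a cut gives $[a,q,r]$ when $a\ge q$, and $[a,a,r]$ when $r<a<q$. (For $a\le r$ it gives $[a,a,a]$, which is an $N$-position for $a\ge1$: the move to $[1,0,0]$ wins. Hence those values of $a$ never occur.) With this notation the lemma says two things. First, among the positions
\[
        X_a=\begin{cases}[a,a,r], & r<a<q,\\ [a,q,r], & a\ge q,\end{cases}
\]
exactly one is a $P$-position. Second, the last sentence of the lemma is just a restatement of the definition of $X_a$. So the plan is to prove existence and uniqueness of a $P$-position in the chain $X_{r+1},X_{r+2},\dots$.

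\emph{Uniqueness.} Suppose $a<b$ and both $X_a$ and $X_b$ are $P$-positions. The key observation is that $X_a$ is reachable from $X_b$ in one move, namely by shortening the first row from $b$ to $a$.
\begin{itemize}
\item If $a\ge q$, this move sends $[b,q,r]$ to $[a,q,r]$.
\item If $a<q\le b$, it sends $[b,q,r]$ to $[a,\min(a,q),\min(a,r)]=[a,a,r]$, using $a>r$.
\item If $b<q$, it sends $[b,b,r]$ to $[a,a,r]$.
\end{itemize}
In every case $X_b$ has a move to the $P$-position $X_a$. That contradicts $X_b$ being a $P$-position.

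\emph{Existence.} This is the main step. Suppose no $X_a$ with $a>r$ is a $P$-position. Every move from a position changes exactly one "shape" parameter, so I look at the positions $[a,q,r]$ with $a\ge q$ large and count how many options they can have in each class. A move from $[a,q,r]$ either
\begin{itemize}
\item shortens the first row, landing in some earlier $X_{a'}$ (all $N$ by assumption) or in some $[a',a',a']$ with $a'\le r$ (also $N$); or
\item touches only rows $2$ and $3$, giving $[a,q',r']$ with $(q',r')\ne(q,r)$, $q'\le q$, $r'\le\min(q',r)$.
\end{itemize}
There are only finitely many pairs $(q',r')$ of the second kind. Since $X_a$ is $N$, for each large $a$ it must have a winning move, and that move must be of the second kind: $[a,q',r']$ is a $P$-position for some pair in this finite set. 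By pigeonhole, some fixed pair $(q',r')$ serves infinitely many $a$. That means $[a_1,q',r']$ and $[a_2,q',r']$ are both $P$ for some $a_1<a_2$. But $[a_2,q',r']\to[a_1,q',r']$ is a legal move, since $a_1\ge q\ge q'$. This is a contradiction.

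So $X_a$ is a $P$-position for some $a$, and by uniqueness for exactly one $a=p>r$. Cases (i) and (ii) of the lemma are precisely the cases $p<q$ and $p\ge q$. The only subtle point is the pigeonhole step. It works because $(q,r)$ is fixed, so the set of row-$2$/row-$3$ targets is finite and independent of $a$. I don't need induction on $(q,r)$ at all.
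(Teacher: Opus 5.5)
Your proof is correct modulo one wrong justification, and it is essentially the paper's argument. Uniqueness comes from shortening the first row from one candidate to the other. Existence comes from two facts: the row-$2$/row-$3$ targets of $[a,q,r]$ form a finite set independent of $a$, and each target pair $(q',r')$ admits at most one $P$-position $[a,q',r']$ with $a\ge q'$. The only difference is packaging: the paper picks an unblocked $p$ directly, while you argue by contradiction and pigeonhole. You are also right that no induction is needed, because the paper's induction hypothesis is only used to get this ``at most one'' fact, which is just the uniqueness argument.

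The wrong justification is the claim that $[a,a,a]$ is an $N$-position because ``the move to $[1,0,0]$ wins.'' For $a\ge 2$ the position $[1,0,0]$ is not reachable in one move: cutting rows $2$ and $3$ to zero leaves $[a,0,0]$, and any move touching row $1$ leaves a smaller square $[j,j,j]$. Your existence step genuinely needs that no first-row move from $[a,q,r]$ to some $[a',a',a']$ with $1\le a'\le r$ is winning; otherwise the pigeonhole never gets started. Replace this with the standard strategy-stealing argument, as the paper does, and treat $a'=0$ separately as the losing move that takes the poisoned square.
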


\begin{proof}
First observe uniqueness. Suppose $p_1<p_2$ are two integers satisfying the
lemma. If $p_i<q$, the position corresponding to $p_i$ is $[p_i,p_i,r]$; if
$p_i\ge q$, it is $[p_i,q,r]$.

In all cases, the position corresponding to $p_1$ is obtained from the
position corresponding to $p_2$ by reducing the first row to length $p_1$.
This is a legal move from one $P$-position to another, impossible.

It remains to prove existence. We argue by induction on $q+r$. The case
$q=r=0$ is clear: among the one-row positions $[p,0,0]$, the unique
$P$-position is $[1,0,0]$.

Now let $q\ge r\ge 0$, and assume the assertion has already been proved for
all pairs $(q',r')$ with $q'\ge r'$ and $q'+r'<q+r$.

Consider first the values of $p$ for which $[p,q,r]$ can move to a
$P$-position by reducing the second row. If the second row is reduced to
$a<q$, the resulting position is
\[
        [p,a,\min(a,r)].
\]
The lower pair $(a,\min(a,r))$ has smaller sum than $(q,r)$. Hence, by the
induction hypothesis, there is at most one value of $p$ for which this
resulting position is a $P$-position.

Similarly, if the third row is reduced to $b<r$, the resulting position is
\[
        [p,q,b],
\]
and the lower pair $(q,b)$ has smaller sum than $(q,r)$. Hence, again by the
induction hypothesis, there is at most one value of $p$ for which this
resulting position is a $P$-position.

Thus only finitely many values of $p$ are blocked by moves reducing the
second or third row. Choose an integer $p\ge q$ which is not one of these
blocked values.

If reducing the first row of $[p,q,r]$ to some length $k$ with $r<k<p$
gives a $P$-position, then existence is proved, since this $k$ satisfies the
lemma. Otherwise, no move reducing the first row to a length greater than
$r$ leads to a $P$-position. A move reducing the first row to a length
$1\le k\le r$ gives the rectangle $[k,k,k]$, which is an $N$-position by the
standard strategy-stealing argument, and the move to length $0$ is the losing
move that takes the poisoned square.

By the choice of $p$, no move reducing the second or third row of $[p,q,r]$
leads to a $P$-position. Therefore $[p,q,r]$ has no move to a $P$-position,
and hence it is itself a $P$-position. Since $p\ge q$, this is alternative
(ii).
\end{proof}

We denote the unique integer in Lemma~\ref{lem:encodedexistence} by
$f(q,r)$. Thus, if
\[
        r<f(q,r)<q,
\]
then
\[
        [f(q,r),f(q,r),r]
\]
is a $P$-position, while if
\[
        f(q,r)\ge q,
\]
then
\[
        [f(q,r),q,r]
\]
is a $P$-position. Conversely, every three-row $P$-position is encoded in
this way.

The recurrence introduced by Brouwer et al. \cite{BrouwerEtAl2005} is based
on the following blocked values. When computing $f(q,r)$, a value
$p$ is blocked if choosing it would give a position with a move to an already
encoded $P$-position. We separate the blocked values according to whether
the move reduces the second row or the third row.

If the second row is reduced to some length $a<q$, then the lower two row
lengths become
\[
        a,\min(a,r).
\]
The corresponding encoded $P$-position is therefore given by
$f(a,\min(a,r))$. Thus the values blocked by reducing the second row are
\[
        R(q,r)=\{f(a,\min(a,r)):0\le a<q\}.
\]
Equivalently,
\[
        R(q,r)=
        \{f(a,a):0\le a<r\}\cup\{f(a,r):r\le a<q\}.
\]

If the third row is reduced to some length $b<r$, then the lower two row
lengths become $q,b$. Hence the values blocked by reducing the third row are
\[
        C(q,r)=\{f(q,b):0\le b<r\}.
\]
Finally, let
\[
        B(q,r)=R(q,r)\cup C(q,r).
\]
Thus $B(q,r)$ is the set of all values already blocked when computing
$f(q,r)$.

The recurrence of Brouwer et al. is
\begin{equation}\label{eq:recurrence}
 f(q,r)=
 \begin{cases}
   f(q-1,r), & q>r\ \text{and}\ f(q-1,r)<q,\\
   \Mex B(q,r), & \text{otherwise},
 \end{cases}
\end{equation}
where $\Mex$ denotes the minimum excluded value
\[
        \Mex(A)=\min\{k>0:k\notin A\}.
\]
We shall call $(q,r)$ a mex cell when the second line of
\eqref{eq:recurrence} applies.

The first line of the recurrence is needed because $B(q,r)$ records only
the values blocked by moves reducing the second or third row. It does not
record possible moves reducing the first row. If
\[
        k=f(q-1,r)<q,
\]
then $k$ already encodes the $P$-position $[k,k,r]$. Hence any position
$[p,q,r]$ with $p\ge q$ has a legal move to this $P$-position by reducing
the first row to length $k$. In such a case recomputing a mex from $B(q,r)$
alone may give a value whose corresponding position is actually an
$N$-position, because it can still move to $[k,k,r]$. Therefore the
recurrence first checks whether such an old encoded $P$-position is already
reachable by reducing the first row. If so, the value remains
$f(q-1,r)$; only otherwise do we compute the new value by the mex rule.

For orientation, the first few rows of the function have simple closed forms.
The recurrence gives
\[
        f(q,0)=q+1 \qquad(q\ge 0),
\]
while
\[
        f(1,1)=3,\qquad f(q,1)=2\quad(q\ge 2),
\]
and
\[
        f(q,2)=q+2\qquad(q\ge 2).
\]
Thus the recurrence contains both unbounded linear rows and rows that become
constant; the proof below shows that these two behaviours separate the two
possible types of winning opening moves.

\begin{lemma}\label{lem:mexcell}
If $0\le r\le q$ and $f(q,r)>q$, then $(q,r)$ is a mex cell and
\[
        \{1,2,\ldots,q\}\subseteq B(q,r).
\]
\end{lemma}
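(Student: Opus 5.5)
First, $(q,r)$ is a mex cell. If it were not, the first line of \eqref{eq:recurrence} would apply, so $q>r$ and $f(q,r)=f(q-1,r)<q$. That contradicts $f(q,r)>q$.

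It remains to show that every $k$ with $1\le k\le q$ lies in $B(q,r)$. Put $p=f(q,r)$. By Lemma~\ref{lem:encodedexistence}, alternative (ii) holds, since $p\ge q$. So $[p,q,r]$ is a $P$-position, and in particular it has no move to a $P$-position. Fix $k$ with $1\le k\le q$. I will exhibit a $P$-position reachable from $[p,q,r]$ by a move on the second or third row, encoded by the value $k$. This shows $k\in R(q,r)$ or $k\in C(q,r)$.

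Consider the position $[k,\min(k,q),\min(k,r)]=[k,k,\min(k,r)]$. It is obtained from $[p,q,r]$ by reducing the first row to length $k<p$. This is a legal move from the $P$-position $[p,q,r]$, so $[k,k,\min(k,r)]$ is an $N$-position. I split into cases.

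\textbf{Case $k\le r$.} Then the position is the rectangle $[k,k,k]$. Here I would switch viewpoints and use the mex structure directly rather than $N$-positions. Since $(q,r)$ is a mex cell, $p=\Mex B(q,r)$. So it suffices to show that $\Mex B(q,r)>q$ forces $\{1,\dots,q\}\subseteq B(q,r)$. This is immediate from the definition of $\Mex$: if some $k\le q$ with $k\ge1$ were missing from $B(q,r)$, then $\Mex B(q,r)\le k\le q<p$, a contradiction.

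So the whole lemma reduces to two steps. The first is the observation that $(q,r)$ is a mex cell, which is forced by the first line of the recurrence. The second is the definition $\Mex(A)=\min\{k>0:k\notin A\}$: from $\Mex B(q,r)=p>q$ we get that every positive integer below $p$, in particular each of $1,\dots,q$, lies in $B(q,r)$.

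The only step needing care is the exact negation of the first line of \eqref{eq:recurrence}. A non-mex cell requires both $q>r$ and $f(q-1,r)<q$, and its value then equals $f(q-1,r)$, which is $<q$ and so cannot exceed $q$. In the boundary case $q=r$ the second line applies automatically, so no case analysis on $q=r$ versus $q>r$ is needed beyond this. I expect no real obstacle; the game-theoretic detour sketched above is unnecessary.
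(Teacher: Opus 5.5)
Your argument is correct and is essentially the paper's proof: if the first line of the recurrence applied, then $f(q,r)=f(q-1,r)<q$, so $(q,r)$ must be a mex cell, and then $\Mex B(q,r)=f(q,r)>q$ forces every $k\in\{1,\ldots,q\}$ into $B(q,r)$ by the definition of $\Mex$. The game-theoretic detour through the $N$-position $[k,k,\min(k,r)]$ is never completed and plays no role in the argument, so you should simply delete it.
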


\begin{proof}
If the first line of \eqref{eq:recurrence} applied, then
$f(q,r)=f(q-1,r)<q$, contrary to $f(q,r)>q$. Hence
$f(q,r)=\Mex B(q,r)$. Since this mex is larger than $q$, every positive
integer at most $q$ lies in $B(q,r)$.
\end{proof}

We shall also use the following elementary consequences of the fact that no
move from a $P$-position can lead to another $P$-position.

\begin{lemma}\label{lem:nonattacking}
The following hold.
\begin{enumerate}
\item[(a)] For fixed $q$, the values $f(q,0),f(q,1),\ldots,f(q,q)$ are distinct.
\item[(b)] If $s<t_1<t_2<p$ and $f(t_1,s)=p$, then $f(t_2,s)\ne p$.
\item[(c)] If $f(p,r)=p$, then $p\notin R(q,r)$ for every $q<p$.
\end{enumerate}
\end{lemma}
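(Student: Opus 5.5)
Each of the three parts will follow the same pattern. We take two positions encoded by $f$ that the hypothesis would make coexist, and exhibit a single legal Chomp move from one to the other. Both being $P$-positions, this is impossible. The only work is identifying the encoded position in each case using Lemma~\ref{lem:encodedexistence}: if $f(q,r)<q$ the position is $[f,f,r]$, and if $f(q,r)\ge q$ it is $[f,q,r]$.

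\textbf{Part (a).} Suppose $b<r\le q$ and $f(q,b)=f(q,r)=p$. If $p\ge q$, the positions are $[p,q,b]$ and $[p,q,r]$, and reducing the third row of the latter from $r$ to $b$ gives the former. If $p<q$, the positions are $[p,p,b]$ and $[p,p,r]$ (both with $p>r$), and the same third-row reduction applies. The mixed case cannot occur, because both values equal the same $p$ and are compared with the same $q$.

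\textbf{Part (b).} Suppose $s<t_1<t_2<p$ and $f(t_1,s)=f(t_2,s)=p$. Since $p>t_2>t_1$, both fall in case (ii), giving $[p,t_1,s]$ and $[p,t_2,s]$. Reducing the second row of $[p,t_2,s]$ to $t_1\ge s$ yields $[p,t_1,\min(t_1,s)]=[p,t_1,s]$, a contradiction.

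\textbf{Part (c).} Here $f(p,r)=p$ encodes $[p,p,r]$ (case (ii) with $p\ge q=p$). Suppose $q<p$ and $p\in R(q,r)$, so $p=f(a,\min(a,r))$ for some $a<q<p$. Since $p>a$, the encoded position is $[p,a,\min(a,r)]$. This is reached from $[p,p,r]$ by reducing the second row from $p$ to $a<p$. That is legal and gives exactly $[p,a,\min(a,r)]$ by the description of second-row moves in the introduction. This contradicts both positions being $P$-positions.

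No step is a real obstacle. The only care needed is confirming in each case which alternative of Lemma~\ref{lem:encodedexistence} applies, so that the encoded positions have the stated shapes. In particular, in (c) we need $p>a$ to land in alternative (ii), and this holds because $a<q<p$.
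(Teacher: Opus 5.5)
Your proposal is correct and follows essentially the same route as the paper: in each part you exhibit a legal move between the two encoded $P$-positions, using $f(q,b)>b$ from Lemma~\ref{lem:encodedexistence} to pin down their shapes. The differences are only cosmetic. In (a) you split explicitly into the cases $p<q$ and $p\ge q$, which the paper leaves implicit. In (c) you treat both pieces of $R(q,r)$ at once via $f(a,\min(a,r))$, whereas the paper handles them separately.
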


\begin{proof}
We use only the defining property that no option of a $P$-position is again a
$P$-position.

First, by Lemma~\ref{lem:encodedexistence}, $f(q,b)>b$ for every $b\le q$.

For (a), suppose $b_1<b_2\le q$ and $f(q,b_1)=f(q,b_2)=p$. Since $p>b_2$,
the two encoded $P$-positions differ only by reducing the third row from
$b_2$ to $b_1$. This would give a legal move from one $P$-position to another.

For (b), since $p>t_2$, the equalities $f(t_1,s)=p$ and $f(t_2,s)=p$ encode
the $P$-positions $[p,t_1,s]$ and $[p,t_2,s]$. The first is obtained from the
second by reducing the second row. Again this would be a legal move from one
$P$-position to another.

For (c), suppose that $f(p,r)=p$. We show that no value contributing to
$R(q,r)$, with $q<p$, can be equal to $p$.

If $r\le a<q<p$ and $f(a,r)=p$, then the $P$-position $[p,a,r]$ is obtained
from the $P$-position $[p,p,r]$ by reducing the second row. This is
impossible.

If $0\le a<r$ and $f(a,a)=p$, then the $P$-position $[p,a,a]$ is obtained
from the $P$-position $[p,p,r]$ by reducing the second row to length $a$.
This is again impossible. Therefore $p\notin R(q,r)$ for every $q<p$.
\end{proof}

\section{A local blocking principle}

The following lemma is the local blocking step used later. Recall that
\[
        C(q,r)=\{f(q,b):0\le b<r\},
\]
so membership in \(C(q,r)\) records values blocked by reducing the third row.

\begin{lemma}[Interval blocking]\label{lem:intervalblocking}
Let $0\le r<q<p$. Suppose
\[
        p\notin C(q,r)
\]
and
\[
        p\in C(t,r)\qquad\text{for every } q<t<p.
\]
Then
\[
        \{q+1,q+2,\ldots,p-1\}\subseteq C(q,r).
\]
\end{lemma}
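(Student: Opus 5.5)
The plan is to translate the hypotheses into statements about $P$-positions, produce for each intermediate $m$ \emph{some} element of $C(q,r)$ lying strictly between $q$ and $p$, and finish by counting, using Lemma~\ref{lem:nonattacking}.

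\emph{Step 1 (unpacking).} For every $m$ with $q<m<p$ the hypothesis $p\in C(m,r)$ gives some $b_m<r$ with $f(m,b_m)=p$. Since $p>m$, the position $[p,m,b_m]$ is a $P$-position. The hypothesis $p\notin C(q,r)$ means $f(q,b)\ne p$ for all $b<r$. As $p>q$, this says that $[p,q,b]$ is an $N$-position for every $b<r$. In particular $[p,q,b_m]$ is an $N$-position, so it has a $P$-option.

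\emph{Step 2 (locating the $P$-option).} We classify the options of $[p,q,b_m]$.
\begin{itemize}
\item Reducing the second row to $a<q$ gives $[p,a,\min(a,b_m)]$. This is also an option of the $P$-position $[p,m,b_m]$, so it is not $P$.
\item Reducing the third row to $c<b_m$ gives $[p,q,c]$ with $c<r$. This is $N$ by Step~1.
\item Reducing the first row to $k\le q$ gives $[k,k,\min(k,b_m)]$. This is again an option of $[p,m,b_m]$, since $k\le q<m$, so it is not $P$.
\end{itemize}
Hence the $P$-option is $[k,q,b_m]$ for some $q<k<p$. By the uniqueness in Lemma~\ref{lem:encodedexistence} (alternative (ii)), $f(q,b_m)=k$. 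So $f(q,b_m)\in C(q,r)\cap\{q+1,\dots,p-1\}$.

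\emph{Step 3 (counting).} The main obstacle is that Step~2 gives some value $k$ in the interval, not necessarily $k=m$. I will not try to identify $k$ directly; instead I will count.

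First, the $b_m$ are pairwise distinct. Suppose $q<m_1<m_2<p$ and $b_{m_1}=b_{m_2}=s$. Then $f(m_1,s)=f(m_2,s)=p$ with $s<r<q<m_1<m_2<p$, which contradicts Lemma~\ref{lem:nonattacking}(b).

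Next, by Lemma~\ref{lem:nonattacking}(a) applied in row $q$, distinct $b_m\le q$ give distinct values $f(q,b_m)$.

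So $m\mapsto f(q,b_m)$ is an injective map from $\{q+1,\dots,p-1\}$ to itself. It is therefore a bijection onto this set. Every element of $\{q+1,\dots,p-1\}$ thus has the form $f(q,b)$ with $b<r$, which is exactly the claimed inclusion in $C(q,r)$.
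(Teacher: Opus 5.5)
Your proof is correct and follows the paper's argument essentially step for step. The paper likewise passes from the $P$-position $[p,t,s_t]$ to its option $[p,q,s_t]$, and excludes second-row, third-row and short first-row replies exactly as you do to obtain $q<f(q,s_t)<p$. It then finishes with the same injection-into-$\{q+1,\ldots,p-1\}$ count, using Lemma~\ref{lem:nonattacking}(a) and (b).
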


\begin{proof}
Fix $t$ with $q<t<p$. Since $p\in C(t,r)$, there is a unique $s_t<r$ such
that
\[
        f(t,s_t)=p.
\]
The uniqueness follows from Lemma~\ref{lem:nonattacking}(a). Moreover, if
$t_1<t_2$ and $s_{t_1}=s_{t_2}$, then Lemma~\ref{lem:nonattacking}(b) is
contradicted. Hence the rows $s_t$ are distinct as $t$ varies over
$q+1,\ldots,p-1$.

For each such $t$, put
\[
        u_t=f(q,s_t).
\]
We claim that
\begin{equation}\label{eq:utbounds}
        q<u_t<p.
\end{equation}

Indeed, consider the $P$-position $[p,t,s_t]$. Reducing the second row from
$t$ to $q$ gives the option $[p,q,s_t]$. Since this position is an option of a
$P$-position, it is an $N$-position; because the game is finite, it therefore
has a move to a $P$-position.

That winning reply cannot be obtained by reducing the second row of
$[p,q,s_t]$, since the same resulting position would already be reachable from
$[p,t,s_t]$. It also cannot be obtained by reducing the third row, since that
would give a $P$-position of the form $[p,q,b]$ with $b<s_t<r$, i.e.
$f(q,b)=p$, contradicting $p\notin C(q,r)$.

Therefore the winning reply from $[p,q,s_t]$ is obtained by reducing the first
row. If that reduction made the first row at most $q$, then the same resulting
position would already be reachable directly from $[p,t,s_t]$ by reducing the
first row, impossible. Hence the resulting $P$-position has the form
$[u,q,s_t]$ with $q<u<p$. By the definition of $f$, this $u$ must be
$f(q,s_t)=u_t$. Thus $q<u_t<p$, proving \eqref{eq:utbounds}.

The values $u_t$ are distinct, since the rows $s_t$ are distinct and,
by Lemma~\ref{lem:nonattacking}(a), the values $f(q,b)$ with $0\le b\le q$
are distinct. Hence
$t\mapsto u_t$ is an injection from $\{q+1,\ldots,p-1\}$ into itself. It is
therefore a bijection, and every value $q+1,\ldots,p-1$ occurs as $f(q,s_t)$
for some $s_t<r$. This is exactly the asserted inclusion in $C(q,r)$.
\end{proof}

\begin{lemma}[Rightmost-hole lemma]\label{lem:rightmosthole}
Let $0\le r<q<p$. Suppose
\[
        p\notin C(q,r)
\]
and
\[
        p\in C(t,r)\qquad\text{for every } q<t<p.
\]
If
\[
        f(q,r)>q,
\]
then
\[
        f(q,r)\ge p.
\]
\end{lemma}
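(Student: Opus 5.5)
Under the hypotheses, Lemma~\ref{lem:intervalblocking} gives $\{q+1,\ldots,p-1\}\subseteq C(q,r)\subseteq B(q,r)$. Since $f(q,r)>q$, Lemma~\ref{lem:mexcell} says that $(q,r)$ is a mex cell and that $\{1,\ldots,q\}\subseteq B(q,r)$. Combining the two inclusions,
\[
        \{1,2,\ldots,p-1\}\subseteq B(q,r),
\]
so
\[
        f(q,r)=\Mex B(q,r)\ge p .
\]

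The only steps are to invoke Lemma~\ref{lem:intervalblocking}, whose hypotheses coincide exactly with those of the present lemma, and Lemma~\ref{lem:mexcell}, which needs $0\le r\le q$ and $f(q,r)>q$. Both conditions hold here, since $r<q$ is assumed. No real obstacle remains; the substantive work was already done in the interval blocking lemma. The only care needed is to note that $C(q,r)\subseteq B(q,r)$ by definition. The mex then skips every value below $p$: the values $1,\ldots,q$ come from Lemma~\ref{lem:mexcell}, and the values $q+1,\ldots,p-1$ come from the interval inclusion.
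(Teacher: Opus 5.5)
Your proof is correct and matches the paper's argument: Lemma~\ref{lem:intervalblocking} gives $q+1,\ldots,p-1\in C(q,r)\subseteq B(q,r)$, and Lemma~\ref{lem:mexcell} gives $1,\ldots,q\in B(q,r)$ together with $f(q,r)=\Mex B(q,r)$. Hence the mex is at least $p$.
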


\begin{proof}
By Lemma~\ref{lem:intervalblocking}, the values $q+1,q+2,\ldots,p-1$ lie in
$C(q,r)$ and hence in $B(q,r)$. By Lemma~\ref{lem:mexcell}, the hypothesis
$f(q,r)>q$ implies that $1,2,\ldots,q$ also lie in $B(q,r)$, and that
$f(q,r)=\Mex B(q,r)$. Thus every value $1,2,\ldots,p-1$ lies in $B(q,r)$, so
the mex is at least $p$.
\end{proof}

\section{Separating diagonal values and row-starts}

We next prove the following diagonal maximality property.

\begin{lemma}[Maximum at the diagonal for fixed $q$]\label{lem:diagonalmax}
For every $q\ge 0$, the diagonal value $f(q,q)$ is the largest among the
values
\[
        f(q,0),f(q,1),\ldots,f(q,q).
\]
Equivalently,
\[
        f(q,q)=\max_{0\le r\le q} f(q,r).
\]
In particular,
\[
        f(q,q)>q.
\]
\end{lemma}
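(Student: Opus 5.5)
The plan is to treat the two assertions separately: first that $f(q,q)>q$, then the maximality $f(q,r)\le f(q,q)$ for $r<q$. The maximality step is only sketched, and its key claim is not yet proved.

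\emph{The bound $f(q,q)>q$.} This follows directly from Lemma~\ref{lem:encodedexistence}. With $r=q$, alternative (i) would require $q<p<q$, which is impossible. So alternative (ii) holds, and $f(q,q)\ge q$. Equality would make $[q,q,q]$ a $P$-position, contradicting the strategy-stealing fact used in that proof. Hence $f(q,q)>q$. By Lemma~\ref{lem:mexcell}, $(q,q)$ is then a mex cell, $m:=f(q,q)=\Mex B(q,q)$, and $\{1,\dots,m-1\}\subseteq B(q,q)$.

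\emph{Maximality: setting up the contradiction.} I would argue by contradiction and by induction on $q$. The case $q=0$ is trivial. Choose $b<q$ with
\[
p:=f(q,b)>m .
\]
Since $m>q$, also $p>q$, so $[p,q,b]$ is a $P$-position and $p\ge q$ makes $(q,b)$ a mex cell. Because $m\notin B(q,q)\supseteq C(q,b+1)$, the value $m$ is not blocked by third-row moves below $b$. Separately, $m$ is not blocked by the second-row moves counted in $R(q,q)$, since $R(q,b)\subseteq R(q,q)$ up to replacing the entries $f(a,b)$ ($b\le a<q$) by entries $f(a,a)$. Controlling this exchange between $R(q,b)$ and $R(q,q)$ is the delicate bookkeeping.

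\emph{Maximality: the intended game argument.} The aim is to show that $[m,q,b]$ would then be forced to be a $P$-position, contradicting $f(q,b)=p\neq m$. Concretely, I would examine the options of $[m,q,b]$.
\begin{enumerate}
\item[1.] Third-row reductions are excluded because $m\notin C(q,b)$.
\item[2.] First-row reductions to a length $k$ with $q\le k<m$ land in $[k,q,b]$. These are excluded because $[p,q,b]$ with $p>m$ is the unique encoded $P$-position on that line.
\item[3.] First-row reductions to a length $k<q$ give $[k,k,\min(k,b)]$. These are handled by the induction hypothesis on smaller diagonals together with Lemma~\ref{lem:nonattacking}(c).
\end{enumerate}

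\emph{Main obstacle.} The hard step is the remaining case of second-row moves from $[m,q,b]$ to $[m,a,\min(a,b)]$ with $b\le a<q$. I will need to show $f(a,b)\ne m$ whenever $f(a,a)$ does not already account for $m$. The first attempt here is Lemma~\ref{lem:intervalblocking} and Lemma~\ref{lem:rightmosthole}, applied with third index $r=b$ and the roles of $q<t<p$ played by $a<q<m$. The hypothesis $m\in C(t,b)$ for intermediate $t$ would come from the inductive maximality at smaller $t$, which gives $f(t,t)$ large. The conclusion would then push $f(a,b)$ to be at least $m$, contradicting the mex structure of $(a,b)$. If this route fails, my fallback is to choose $b$ minimizing $f(q,b)-m$ and run a counting injection like the one in the proof of Lemma~\ref{lem:intervalblocking}.
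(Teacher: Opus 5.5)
There is a genuine gap, and it is exactly where you flagged it. The second-row case is not a leftover detail; it is the whole content of the lemma.

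Your cases 1--3 are immediate. Case 3 needs no induction: by the uniqueness in Lemma~\ref{lem:encodedexistence}, the only first-row length from $[m,q,b]$ that could reach a $P$-position is $f(q,b)=p>m$. Moreover, $[m,q,b]$ is itself an option of the $P$-position $[m,q,q]$ (reduce the third row), so it is an $N$-position. Every $[m,a,a]$ with $a<q$ is likewise an option of $[m,q,q]$, hence not a $P$-position. So under your hypothesis $f(q,b)>m$, the winning reply from $[m,q,b]$ must go to some $[m,a,b]$ with $b<a<q$, i.e.\ $f(a,b)=m$. The proposal contains no argument excluding this.

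The suggested route via Lemmas~\ref{lem:intervalblocking} and~\ref{lem:rightmosthole} cannot supply one, for two reasons. First, nothing provides the hypothesis $m\in C(t,b)$ for all $a<t<m$. Maximality of $f(t,t)$ says nothing about whether $f(t,b')=m$ for some $b'<b$, and for $t\ge q$ there is no induction hypothesis at all. Second, even granting it, the conclusion $f(a,b)\ge m$ is consistent with $f(a,b)=m$, which is precisely the case to be ruled out.

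The missing idea is a pigeonhole count over the whole row rather than a single competitor, since one value $m$ can always be absorbed by one slot $f(a,b)$. The paper's argument runs as follows.
\begin{enumerate}
\item[1.] Take $r<q$ with $f(q,r)$ \emph{maximal}. Your fallback choice, $b$ minimizing $f(q,b)-m$, does not work: some $f(q,s)$ with $s>b$ may exceed $f(q,b)$ and escape $B(q,b)$.
\item[2.] Since the $q+1$ values $f(q,\cdot)$ are distinct positive integers, $f(q,r)>q$. Hence $f(q,r)=\Mex B(q,r)$.
\item[3.] Each of the $q-r$ distinct values $f(q,s)$, $r<s\le q$, lies below $f(q,r)$ and therefore in $B(q,r)$.
\item[4.] None of them lies in $C(q,r)$, by Lemma~\ref{lem:nonattacking}(a). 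None equals $f(t,t)$ with $t\le r$: otherwise reduce the second row of the $P$-position encoded by $f(q,s)$ to $t$.
\item[5.] So all $q-r$ of them must occur among the $q-r-1$ values $f(t,r)$ with $r<t<q$, which is impossible.
\end{enumerate}
This needs no induction on $q$. Separately, $f(q,q)>q$ follows at once from the condition $p>r$ in Lemma~\ref{lem:encodedexistence} with $r=q$.
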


\begin{proof}
Suppose, to the contrary, that \(f(q,q)\) is not the largest among
\(f(q,0),\ldots,f(q,q)\). Let \(f(q,r)\), with \(r<q\), be the largest
of these values. By Lemma~\ref{lem:nonattacking}(a), the \(q+1\) values
are distinct positive integers, so their maximum is greater than \(q\).
Thus \(f(q,r)>q\). Lemma~\ref{lem:mexcell} therefore implies that
\((q,r)\) is a mex cell and
\[
        f(q,r)=\Mex B(q,r).
\]

For each \(s\) with \(r<s\le q\), we have \(f(q,s)<f(q,r)\). Since
\(f(q,r)=\Mex B(q,r)\), this implies \(f(q,s)\in B(q,r)\). It cannot occur in
\(C(q,r)\), by distinctness for fixed \(q\), Lemma~\ref{lem:nonattacking}(a).

It also cannot occur among the values $f(t,t)$ with $0\le t\le r$. Indeed, if
$f(t,t)=f(q,s)=m$ for some $t\le r<s$, then $m>s>t$. The $P$-position encoded
by $f(q,s)=m$ has second row of length at least $s$, and reducing that second
row to $t$ gives the $P$-position $[m,t,t]$, encoded by $f(t,t)=m$. This is
impossible.

Hence each of the $q-r$ values $f(q,r+1),\ldots,f(q,q)$ must occur at a
position $f(t,r)$ with $r<t<q$. There are only $q-r-1$ such positions, a
contradiction. Thus the diagonal entry is maximal. Since the values
$f(q,0),f(q,1),\ldots,f(q,q)$ are $q+1$ distinct positive integers, their
maximum is at least $q+1$, and therefore
$f(q,q)>q$.
\end{proof}

The following propagation lemma is the main exclusion step.

\begin{lemma}[Row-start propagation]\label{lem:rowstartpropagation}
Assume
\[
        f(p,r)=p.
\]
Then, for every $q$ with
\[
        r<q<p,
\]
one has
\[
        p\in C(q,r).
\]
\end{lemma}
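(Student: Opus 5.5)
The plan is to argue by contradiction, choosing the counterexample $q$ as large as possible, so that the hypotheses of the rightmost-hole lemma (Lemma~\ref{lem:rightmosthole}) hold automatically. So suppose $f(p,r)=p$ and the conclusion fails. Let $q$ be the \emph{largest} integer with $r<q<p$ and $p\notin C(q,r)$. By maximality, $p\in C(t,r)$ for every $t$ with $q<t<p$. Thus $q$ satisfies both hypotheses of Lemmas~\ref{lem:intervalblocking} and~\ref{lem:rightmosthole}.

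Next I would run a game-theoretic analysis of the position $[p,q,r]$. Since $f(p,r)=p\ge p$, the position $[p,p,r]$ is a $P$-position. Reducing its second row from $p$ to $q$ gives $[p,q,r]$, so $[p,q,r]$ is an $N$-position and has a move to some $P$-position. I will rule out every type of move except one.

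\emph{Second row.} Reducing the second row of $[p,q,r]$ to $a<q$ gives $[p,a,\min(a,r)]$. This is also an option of $[p,p,r]$, so it is not a $P$-position.

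\emph{First row to $k\le q$.} Reducing the first row to $k\le q$ gives $[k,k,\min(k,r)]$. This is again an option of $[p,p,r]$, obtained by cutting its first row to $k$, so it is not a $P$-position.

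\emph{Third row.} Reducing the third row to $b<r$ gives $[p,q,b]$. This is a $P$-position exactly when $f(q,b)=p$, which is excluded by $p\notin C(q,r)$.

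\emph{First row to $q<k<p$.} The only remaining possibility is reducing the first row to $k$ with $q<k<p$, giving $[k,q,r]$. Since $k\ge q$, this is a $P$-position exactly when $f(q,r)=k$. So we must have $q<f(q,r)<p$.

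Finally, since $f(q,r)>q$, Lemma~\ref{lem:rightmosthole} applies to this $q$ and gives $f(q,r)\ge p$. This contradicts $f(q,r)<p$, which proves the lemma.

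The main obstacle is the first-row move to a length strictly between $q$ and $p$: it is the one option of $[p,q,r]$ not inherited from $[p,p,r]$. The choice of $q$ as the largest bad index is exactly what makes the rightmost-hole lemma available to exclude it. Beyond that, only routine bookkeeping is needed to check that each other option of $[p,q,r]$ coincides with an option of $[p,p,r]$.
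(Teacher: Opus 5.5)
Your proof is correct. It shares the paper's skeleton (take the largest bad $q$, then apply Lemma~\ref{lem:rightmosthole}), but the middle step is organized differently.

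The paper works inside the recurrence. It gets $f(q,r)>q$ from the first branch of \eqref{eq:recurrence}: if $f(q,r)\le q$, row $r$ stays constant at a value at most $q$ up to first argument $p$, contradicting $f(p,r)=p$. It then gets $f(q,r)\ge p$ from Lemma~\ref{lem:rightmosthole}. It gets $f(q,r)\le p$ from the mex, because $p\notin C(q,r)$ by the choice of $q$ and $p\notin R(q,r)$ by Lemma~\ref{lem:nonattacking}(c). Hence $f(q,r)=p$, and the contradiction is that $[p,q,r]$ would be a $P$-position reachable from $[p,p,r]$.

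You use the same fact in the opposite direction. Since $[p,q,r]$ is an option of $[p,p,r]$, it is an $N$-position and so has a $P$-option. Your case analysis is the argument that proves \eqref{eq:utbounds}, run with $[p,p,r]$ in place of $[p,t,s_t]$. It shows that this option must be $[f(q,r),q,r]$ with $q<f(q,r)<p$. This gives both bounds at once. You therefore need neither the constant-branch propagation, nor Lemma~\ref{lem:nonattacking}(c), nor the ``mex cannot skip $p$'' step.

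Your route even lets you drop the recurrence entirely. Once $q<f(q,r)<p$, Lemma~\ref{lem:intervalblocking} already places $f(q,r)$ in $C(q,r)$, i.e.\ $f(q,r)=f(q,b)$ for some $b<r$. This contradicts Lemma~\ref{lem:nonattacking}(a) without using the mex-based Lemma~\ref{lem:rightmosthole}. The paper's version instead stays within the recurrence-and-mex framework the note adopts, at the price of relying on the cited recurrence at several points.
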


\begin{proof}
Suppose not, and choose the largest $q$ with $r<q<p$ such that
$p\notin C(q,r)$. By maximality,
\[
        p\in C(t,r)\qquad\text{for every }q<t<p.
\]
We first show that
\begin{equation}\label{eq:fqrlarge}
        f(q,r)>q.
\end{equation}
If $f(q,r)<q$, then the recurrence enters the constant branch at the next
step, since $f(q,r)<q+1$, and so $f(q+1,r)=f(q,r)$. Inductively, the value remains constant as the first argument increases up to
$p$, which contradicts $f(p,r)=p$. If
$f(q,r)=q$, then again $f(q,r)<q+1$, so the value becomes constant from first argument $q+1$ onward, now with
value $q$, and again $f(p,r)\ne p$. Hence
\eqref{eq:fqrlarge} holds.

The rightmost-hole lemma now gives
\begin{equation}\label{eq:fqrgep}
        f(q,r)\ge p.
\end{equation}
On the other hand, the value $p$ is absent from $B(q,r)$. It is absent from
$C(q,r)$ by the choice of $q$, and it is absent from $R(q,r)$ by
Lemma~\ref{lem:nonattacking}(c), since $q<p$ and $f(p,r)=p$.

Since \eqref{eq:fqrlarge} makes $(q,r)$ a mex cell, $f(q,r)=\Mex B(q,r)$. A
mex cannot skip over an absent value $p$; together with \eqref{eq:fqrgep},
this forces
\[
        f(q,r)=p.
\]
Since $r<q<p$, this equality encodes the $P$-position $[p,q,r]$. But
$f(p,r)=p$ encodes the $P$-position $[p,p,r]$. The position $[p,q,r]$ is
obtained from $[p,p,r]$ by reducing the second row from $p$ to $q$, a legal
move from one $P$-position to another. This is impossible. Therefore no such
$q$ exists.
\end{proof}

Define
\[
        D=\{f(a,a):a\ge 0\}
\]
and
\[
        S=\{p\ge 1:\text{ there exists } r<p \text{ such that } f(p,r)=p\}.
\]
The elements of $D$ are the first coordinates of diagonal $P$-positions. The
elements of $S$ are the first coordinates at which some row starts to be
constant.

\begin{proposition}\label{prop:disjoint}
The sets $D$ and $S$ are disjoint.
\end{proposition}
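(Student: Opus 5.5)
Suppose, for contradiction, that $p\in D\cap S$. Then $p=f(a,a)$ for some $a\ge 0$, and $f(p,r)=p$ for some $r<p$. Two facts fix the positions of $a$ and $r$ relative to $p$. By Lemma~\ref{lem:encodedexistence}, $f(a,a)>a$, so $a<p$. By Lemma~\ref{lem:diagonalmax}, $f(a,a)>a$ as well, so the diagonal value $p$ encodes the $P$-position $[p,a,a]$. The row-start value $f(p,r)=p$ encodes the $P$-position $[p,p,r]$. The plan is to find a legal move between these two $P$-positions, or between one of them and a third $P$-position.

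The first case is $a<r$. Reducing the second row of $[p,p,r]$ to $a$ gives $[p,a,a]$, which is a legal move from one $P$-position to another, impossible. This is exactly Lemma~\ref{lem:nonattacking}(c): $p=f(a,a)\in R(q,r)$ for any $q$ with $a<q<p$, contradicting $p\notin R(q,r)$. If $a=r$, the same move reduces the second row from $p$ to $a$, giving $[p,a,a]=[p,a,r]$, again a contradiction.

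The remaining case is $r<a<p$, and here I would use Lemma~\ref{lem:rowstartpropagation} with $q=a$. It gives $p\in C(a,r)$, so $f(a,b)=p$ for some $b<r<a$. Then the $P$-position $[p,a,b]$ arises from the $P$-position $[p,a,a]$ by reducing the third row from $a$ to $b$. This is impossible, and it is also ruled out directly by Lemma~\ref{lem:nonattacking}(a), since $f(a,b)=f(a,a)$ with $b<a$.

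The main obstacle is this last case, but it is already handled by the row-start propagation lemma. What remains is to check its hypothesis $r<q<p$ with $q=a$, which holds in this case, and to confirm that the degenerate boundaries $a=0$ or $r=0$ create no gaps in the case split. When $a=0$ we have $a\le r$, so the first or second case applies.
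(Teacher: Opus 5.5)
Your proof is correct and is essentially the paper's argument. The case $r\ge a$ is ruled out by the second-row move from $[p,p,r]$ to $[p,a,a]$, and the case $r<a<p$ by applying Lemma~\ref{lem:rowstartpropagation} with $q=a$, which gives $f(a,b)=p=f(a,a)$ with $b<r<a$ and contradicts Lemma~\ref{lem:nonattacking}(a); your aside via Lemma~\ref{lem:nonattacking}(c) is unnecessary, since the direct move argument you give alongside it already suffices.
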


\begin{proof}
Assume, for a contradiction, that
\[
        p=f(a,a)
\]
and also
\[
        f(p,r)=p
\]
for some $r<p$.

By Lemma~\ref{lem:diagonalmax}, $p=f(a,a)>a$. If $r\ge a$, then the
$P$-position $[p,a,a]$ is obtained from the $P$-position $[p,p,r]$ by reducing
the second row to $a$, impossible. Hence
\[
        r<a<p.
\]
Apply Lemma~\ref{lem:rowstartpropagation} with $q=a$. It gives
\[
        p\in C(a,r),
\]
so $p=f(a,b)$ for some $b<r<a$. But also $p=f(a,a)$, contradicting distinctness for fixed $a$, Lemma~\ref{lem:nonattacking}(a).
\end{proof}

\begin{proposition}\label{prop:partition}
Every positive integer belongs to exactly one of $D$ and $S$.
\end{proposition}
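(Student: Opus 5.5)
The plan is to read both membership conditions as statements about the opening moves of the rectangle $[p,p,p]$. Exclusivity, that $p$ cannot lie in both sets, is Proposition~\ref{prop:disjoint}. So it only remains to show that every $p\ge 1$ lies in $D\cup S$.

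Fix $p\ge 1$ and consider the rectangle $[p,p,p]$. By the standard strategy-stealing argument, already invoked in the proof of Lemma~\ref{lem:encodedexistence}, it is an $N$-position. Hence it has a move to a $P$-position, and I will sort the possible moves by the row they shorten.

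\emph{First row.} Shortening the first row to $k$ with $1\le k<p$ gives the smaller rectangle $[k,k,k]$. This is again an $N$-position by strategy stealing. Shortening it to $0$ takes the poisoned square and loses. So the winning move is not of this type.

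\emph{Second row.} Shortening the second row to $a<p$ gives $[p,a,\min(a,p)]=[p,a,a]$. Suppose this is a $P$-position. Since $p\ge a$, alternative (ii) of Lemma~\ref{lem:encodedexistence} applies with $(q,r)=(a,a)$ and first row $p$. By the uniqueness in that lemma, $f(a,a)=p$, so $p\in D$. The boundary case $a=0$ gives $f(0,0)=1$, which is covered by the same reasoning.

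\emph{Third row.} Shortening the third row to $r<p$ gives $[p,p,r]$. Suppose this is a $P$-position. Alternative (ii) of Lemma~\ref{lem:encodedexistence} applies with $(q,r)=(p,r)$, since the first row length $p$ satisfies $p\ge q=p$. We also need $p>r$, which holds. By uniqueness, $f(p,r)=p$ with $r<p$, so $p\in S$.

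Combining the three cases, $p\in D\cup S$, and disjointness gives exactly one. I do not expect a real obstacle here. The only care needed is to check that each encoded position really falls under alternative (ii), with first row at least the second, so that the uniqueness of $f$ identifies the value as $p$. This matching between the membership conditions and the second-row and third-row opening moves is also what later yields Theorem~\ref{thm:main} directly.
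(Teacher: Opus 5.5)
Your proof is correct and follows the paper's argument. Strategy stealing makes $[p,p,p]$ an $N$-position, and first-row moves (to $[k,k,k]$ or the poisoned square) are excluded, so a winning second- or third-row move gives $p=f(a,a)\in D$ or $f(p,r)=p$ with $r<p$, hence $p\in S$; Proposition~\ref{prop:disjoint} supplies exclusivity, and your explicit check that both positions fall under alternative (ii) of Lemma~\ref{lem:encodedexistence} is a detail the paper leaves implicit.
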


\begin{proof}
Let $p\ge 1$. The rectangle $[p,p,p]$ is an $N$-position by the standard
strategy-stealing argument. A winning move from it cannot be a move that cuts
all three rows to a smaller rectangle $[m,m,m]$ with $m\ge 1$, since that is
again an $N$-position; the move that takes the poisoned square is losing.
Therefore a winning opening move has one of the following two forms:
\[
        [p,a,a]\qquad (a<p),
\]
or
\[
        [p,p,r]\qquad (r<p).
\]
In the first case $p=f(a,a)$, so $p\in D$. In the second case $f(p,r)=p$, so
$p\in S$. Thus every positive integer belongs to $D\cup S$.

By Proposition~\ref{prop:disjoint}, the two sets are disjoint. Hence every
positive integer belongs to exactly one of them.
\end{proof}

\begin{proof}[Proof of Theorem~\ref{thm:main}]
A move from $[n,n,n]$ is winning if and only if it moves to a $P$-position. By
Proposition~\ref{prop:partition}, exactly one of the following alternatives
holds:
\begin{enumerate}
\item[(i)] $n=f(a,a)$ for some $a<n$;
\item[(ii)] $f(n,r)=n$ for some $r<n$.
\end{enumerate}
The value $a$ in (i) is unique: if $n=f(a,a)=f(b,b)$ with $a<b$, then $n>b$
by Lemma~\ref{lem:diagonalmax}, and the $P$-position $[n,a,a]$ is obtained
from $[n,b,b]$ by reducing the second row. This is impossible. The value $r$
in (ii) is unique by distinctness for fixed $n$, Lemma~\ref{lem:nonattacking}(a).

In case (i), the unique winning opening move is to $[n,a,a]$. In case (ii),
the unique winning opening move is to $[n,n,r]$. Therefore $[n,n,n]$ has
exactly one winning opening move.
\end{proof}

\begin{corollary}[The row of the winning opening move]\label{cor:openingrows}
With the row-numbering used in this note, no winning opening move from
$[n,n,n]$ is made in the first row. The move is made in the second row exactly
for the values
\[
        n\in D=\{f(a,a):a\ge 0\},
\]
and is then the move to $[n,a,a]$, where $a<n$ is uniquely determined. The
move is made in the third row exactly for the values
\[
        n\in S=\{p\ge 1: f(p,r)=p\text{ for some }r<p\},
\]
and is then the move to $[n,n,r]$, where $r<n$ is uniquely determined.
Equivalently, the second-row values are A029900 and the third-row values are
A029901.

The first terms of the two sequences of board sizes are
\[
\begin{array}{c|l}
\text{second row} & 1,3,4,6,8,10,11,13,15,16,18,20,21,24,25,27,\ldots\\
\text{third row}  & 2,5,7,9,12,14,17,19,22,23,26,29,31,33,36,38,\ldots
\end{array}
\]
\end{corollary}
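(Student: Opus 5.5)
The corollary is mostly a rereading of the proof of Theorem~\ref{thm:main} in terms of which row is cut. The plan is to classify the possible opening moves from $[n,n,n]$ by row, rule out the first row, and then match the two remaining cases with $D$ and $S$ via Proposition~\ref{prop:partition}. The identification with A029900 and A029901 then comes from the definitions of those sequences.

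\textbf{First row.} A move in the first row reduces it to some length $m<n$, producing $[m,m,m]$. For $m\ge 1$ this is a rectangle, hence an $N$-position by strategy stealing, so the move is not winning. For $m=0$ the move takes the poisoned square and loses. So no first-row opening move is winning.

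\textbf{Second and third rows.} A second-row move from $[n,n,n]$ to length $a<n$ gives $[n,a,\min(a,n)]=[n,a,a]$. This is a $P$-position exactly when it is the position encoded by $f(a,a)$.
\begin{itemize}
\item By Lemma~\ref{lem:diagonalmax}, $f(a,a)>a$, so the encoding is alternative (ii) of Lemma~\ref{lem:encodedexistence} whenever $f(a,a)\ge a$.
\item Hence $[n,a,a]$ is a $P$-position iff $f(a,a)=n$, i.e.\ iff $n\in D$ with witness $a$.
\end{itemize}
A third-row move to $r<n$ gives $[n,n,r]$, which is a $P$-position iff $f(n,r)=n$ (alternative (ii) with $p=q=n$). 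That is, iff $n\in S$ with witness $r$.

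\textbf{Uniqueness of the witnesses.} Uniqueness of $a$ and of $r$ is exactly the argument already given in the proof of Theorem~\ref{thm:main}. For $a$ it uses Lemma~\ref{lem:diagonalmax} and the fact that two $P$-positions cannot be related by a move; for $r$ it uses Lemma~\ref{lem:nonattacking}(a). Proposition~\ref{prop:partition} then says each $n\ge1$ falls in exactly one of the two cases. This gives the "exactly for the values" statements, since the winning move exists and lies in the second row iff $n\in D$, and in the third row iff $n\in S$.

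\textbf{OEIS identification.} This step carries the only real subtlety, which is bookkeeping rather than mathematics. I would check that the OEIS definitions of A029900 and A029901 are phrased as the board sizes whose winning move is in the second row, respectively the third row. That phrasing matches our row numbering, and I would confirm it against the listed initial terms.

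As a consistency check, the small values in the table follow from the closed forms:
\begin{itemize}
\item $f(0,0)=1$ and $f(1,1)=3$, so $1,3\in D$;
\item $f(2,1)=2$, so $2\in S$.
\end{itemize}
The remaining listed terms would be read off from the recurrence, or cited from Brouwer's computations. Since these terms are illustrative rather than part of the claim, citing them suffices.
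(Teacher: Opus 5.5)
Your proposal is correct and follows essentially the same route as the paper. First-row moves are excluded by strategy stealing, and the second- and third-row cases are identified with $D$ and $S$ through Proposition~\ref{prop:partition} and the uniqueness arguments in the proof of Theorem~\ref{thm:main}, with the OEIS identification and the listed terms taken as citations, exactly as in the paper; your check that $[n,a,a]$ is a $P$-position iff $f(a,a)=n$ (alternative (i) of Lemma~\ref{lem:encodedexistence} is vacuous on the diagonal, and $f(a,a)>a$ makes the move legal) only spells out a step the paper leaves implicit.
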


\begin{proof}
A first-row move from $[n,n,n]$ leaves a smaller rectangle $[a,a,a]$. If
$a\ge 1$, this is an $N$-position by the standard strategy-stealing argument;
if $a=0$, the move takes the poisoned square. Hence no first-row opening move
is winning. The remaining two alternatives are exactly the alternatives in
Proposition~\ref{prop:partition} and in the proof of Theorem~\ref{thm:main}.
The OEIS identifications are the standard Chomp sequences listed under A029900
and A029901.
\end{proof}

\section*{Acknowledgements}
AI tools were used during the preparation of this note for exploratory work
with the recurrence, proof-structure brainstorming, and editorial revision of
drafts. The final mathematical arguments and responsibility for correctness
are the author's. As an independent check, the proof has been fully formalized
and machine-verified in the Lean 4 proof assistant, using only its standard
library.

\end{document}